\newcommand{\levy}{L\'{e}vy }
\newcommand{\p}{{\mathbb P}}
\newcommand{\e}{{\mathbb E}}
\newcommand{\D}{{\mathrm d}}
\newcommand{\R}{{\mathbb R}}
\newcommand{\1}[1]{\mbox{\rm\large  1}_{\{#1\}}}
\newcommand{\eqd}{\stackrel{d}{=}}
\newtheorem{cor}{Corollary}[section]
\newtheorem{prop}{Proposition}[section]
\begin{document}
\title[Sparre-Andersen identity]{Sparre-Andersen identity and the last passage time}
\author[J.\ Ivanovs]{Jevgenijs Ivanovs}
\address{Department of Actuarial Science, Faculty of Business and Economics, University of Lausanne, CH-1015 Lausanne, Switzerland}
\begin{abstract}
It is shown that the celebrated result of Sparre Andersen for random walks and \levy processes has intriguing consequences when the last time of the process in $(-\infty,0]$, say $\sigma$, is added to the picture. In the case of no positive jumps this leads to six random times, all of which have the same distribution - the uniform distribution on $[0,\sigma]$. Surprisingly, this result does not appear in the literature, even though it is based on some classical observations concerning exchangeable increments.
\end{abstract}
\maketitle

\section{The main observation}\label{sec:intro}
The main observation of this note is best illustrated by a \levy process $X_t,t\geq 0$ without positive jumps.
A particular example of such $X$ is given by a compound Poisson process with positive linear drift and negative jumps,
which occupies a central place in applied probability: in risk theory it is known as a Cram\'er-Lundberg model~\cite{asmussen_ruin},
and in queueing theory $-X$ drives the workload process in the classical M/G/1 queue~\cite{asmussen_applied}.

\begin{figure}[h!]
  \centering
  \includegraphics{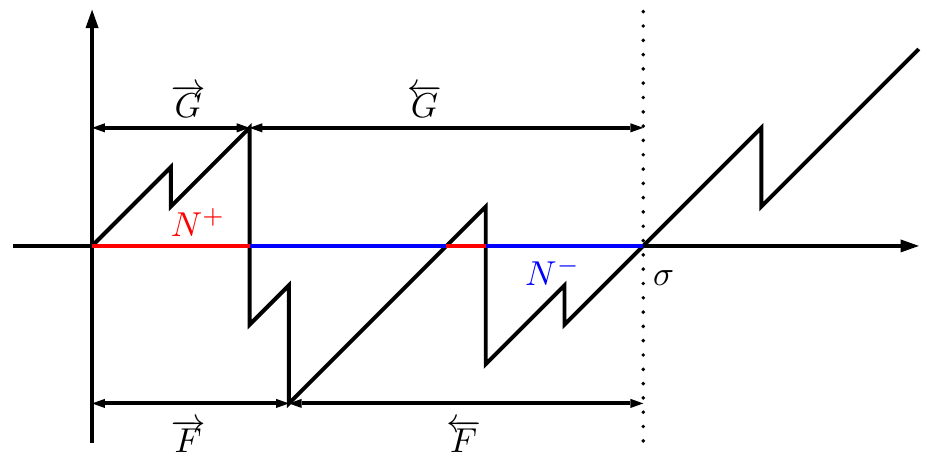}
  \caption{A sample path of $X$ and the corresponding random times.}
\label{fig_CL}
\end{figure}
Let $\sigma=\sup\{t\geq 0:X_t\leq 0\}$ be the last time of $X$ in $(-\infty,0]$, which is finite a.s. when $\e X_1>0$.
Define the following random times, see Fig.~\ref{fig_CL}:
\begin{align*}&N^-=\int_0^\sigma \1{X_s\leq 0}\D s, &N^+=\int_0^\sigma \1{X_s\geq 0}\D s,\\
&\overrightarrow F=\sup\{t\in[0,\sigma):X_t=\underline X_t\}, &\overrightarrow G=\sup\{t\in[0,\sigma):X_t=\overline X_t\},\\
&\overleftarrow F=\sigma-\overrightarrow F, &\overleftarrow G=\sigma-\overrightarrow G,
%&\overleftarrow F=\sigma-\inf\{t\in[0,\sigma):X_t=\underline X_t\}, &\overleftarrow G=\sigma-\inf\{t\in[0,\sigma):X_t=\overline X_t\},
\end{align*}
where $\overline X_t=\sup\{X_s:s\in[0,t]\}$ and $\underline X_t=\inf\{X_s:s\in[0,t]\}$ are the running supremum and infimum processes respectively.
When $\sigma=0$ we assume that all these times are~0. In words, $N^-$ is the time spent in the non-positive half-line, $\overrightarrow F$ is the time of the infimum, and
$\overleftarrow F$ is the time from the infimum to~$\sigma$.
%It is well-known that local extrema of $X$ are all distinct,
%unless $X$ is a compound Poisson process (with~0 linear drift), see~\cite[Prop.\ VI.4]{bertoin}.
%In the case of no positive jumps the latter process leads to trivialities, and so we may choose not to differentiate between the first and the last infimum.
%This distinction is important in general and so the case of a random walk on lattice, which is skip-free upwards.

\begin{prop}\label{prop_nojumps}
Let $X$ be a \levy process without positive jumps, such that $\e X_1>0$. Then $\overrightarrow F, \overleftarrow F, \overrightarrow G, \overleftarrow G, N^-,N^+$ have the same distribution.
\end{prop}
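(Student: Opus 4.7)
My plan is to reduce Proposition~\ref{prop_nojumps} to a classical statement about a \levy bridge, and then read off all six distributional identities at once from the Sparre--Andersen identity together with the uniformity of the argmax of a bridge.

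The first and principal step is to identify the law of the path on $[0,\sigma]$. Because $X$ has no positive jumps, the closed set $\{t:X_t\le 0\}$ cannot be exited by an upward jump, so $X_\sigma\le 0$; combined with right-continuity and $X_t>0$ for $t>\sigma$, this forces $X_\sigma=0$ on $\{\sigma>0\}$. Informally the event $\{\sigma=s\}$ therefore factorises into a pre-$s$ event $\{X_s=0\}$ and a post-$s$ event $\{X_{s+u}>0\text{ for all }u>0\}$ that depends only on an independent copy of $X$ started at $0$. Hence I expect that, conditional on $\sigma=s$, the restricted path $(X_t)_{0\le t\le s}$ is distributed as the \levy bridge of $X$ from $0$ to $0$ on $[0,s]$. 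Making this rigorous is the main obstacle, since $\sigma$ is a last-passage time and not a stopping time; I would justify it either by an $h$-transform / potential-theoretic disintegration of the joint law of $(X_{[0,s]},\sigma)$, or by reversing time from a fixed large (or independent exponential) horizon and using spectral negativity to pin $X_\sigma$ to $0$.

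Given the bridge description, the remainder is classical. Discretising $[0,s]$ with mesh $s/n$, the $n$ increments of the bridge are exchangeable with sum $0$. A cyclic-shift argument on these increments then yields simultaneously (i) the discrete Sparre--Andersen identity---the index of the (last) maximum of the partial sums equals in distribution the number of strictly positive partial sums---and (ii) uniformity of that index on $\{0,\dots,n-1\}$. Passing to the limit $n\to\infty$ gives, for the continuous bridge, $\overrightarrow G\eqd N^+$ with common law uniform on $[0,s]$; running the same argument for the infimum delivers $\overrightarrow F\eqd N^-$ with the same uniform law.

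Finally, $\overleftarrow F=\sigma-\overrightarrow F$ and $\overleftarrow G=\sigma-\overrightarrow G$ have the same conditional law as $\overrightarrow F$ and $\overrightarrow G$ by invariance of the uniform law on $[0,s]$ under $t\mapsto s-t$. Unconditioning on $\sigma$ then produces the six-way identity in distribution asserted by the proposition. The work is concentrated in the bridge identification of the first step; everything downstream is standard exchangeable-increments machinery.
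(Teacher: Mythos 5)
Your route is genuinely different from the paper's, and it aims at a strictly stronger conclusion. The paper never identifies the pre-$\sigma$ path as a bridge. Instead it proves a random-walk statement first: conditionally on $\{\sigma=k\}$ (a positive-probability event for the walk) the increments $\zeta_1,\dots,\zeta_k$ are exchangeable, hence (a) the time-reversed and negated walk $\hat S_i=S_\sigma-S_{\sigma-i}$ has the law of $S$ up to $\sigma$, which pairs $\overrightarrow F\eqd\overleftarrow G$, $\overleftarrow F\eqd\overrightarrow G$, $N^-\eqd\tilde N^+$, $N^+\eqd\tilde N^-$; and (b) the basic Sparre--Andersen identity (Feller XII.8.2, valid for exchangeable increments with no constraint on the terminal value) applied once unconditionally and once conditionally on $\{\sigma=k\}$ gives $N^-\eqd\overrightarrow F$ and $N^+\eqd\overrightarrow G$. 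Spectral negativity enters only at the very end, through $X_{\sigma-}=0$, to merge the two resulting triples into one. Your cyclic-shift argument on the zero-sum increments buys the uniform law on $[0,\sigma]$ in one stroke, which the paper also records (Section 2) but only as a further consequence; the trade-off is that you need the full bridge structure, whereas the paper needs only plain exchangeability and symmetry under reversal.

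The one step you should not leave as an expectation is the disintegration ``given $\sigma=s$, the path on $[0,s]$ is the bridge from $0$ to $0$''. As stated it is a last-exit decomposition at the point $0$, and making it rigorous uniformly over the hypotheses of the proposition (which include compound Poisson plus drift, where $X_s$ need not have a density at $0$ and local extrema need not be distinct) is delicate; moreover the cyclic-shift uniformity itself requires a.s.\ unique extrema, which fails in the lattice compound Poisson case --- this is exactly why the paper flags that $\overleftarrow F,\overleftarrow G$ must be redefined there and restricts the uniform law to processes with distinct extrema. The efficient fix is to notice that you do not need the bridge at all: discretize first, observe that $\{\sigma=k\}=\{S_k\le 0,\,S_i>0\ \forall i>k\}$ depends on $(\zeta_1,\dots,\zeta_k)$ only through $S_k$, so conditional exchangeability (indeed cyclic exchangeability) of the first $k$ increments is immediate, and all your downstream steps go through on the walk before passing to the \levy limit. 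With that replacement your argument is sound and, under the distinct-extrema assumption, yields the proposition together with the uniform law.
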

Note that we can replace $\sigma$ by $\infty$ in the definitions of $N^-$ and $\overrightarrow F$.
The equivalence of laws of these two random variables is known as Sparre-Andersen identity, see e.g.~\cite[Lem.\ VI.15]{bertoin}.
This identity for random walks was first established by E.\ Sparre Andersen in~\cite{sparre_andersen1953sums} using combinatorial approach;
a simpler proof can be found in~\cite[Thm.\ XII.8.2]{feller}.

The transform of the single distribution in Proposition~\ref{prop_nojumps} is well known.
Define the first passage time $\tau_x=\inf\{t\geq 0:X_t>x\}$ and let $\psi(s)=\log(\e e^{s X_1}),\Phi(s)=-\log(\e e^{-s \tau_1})$ for $s\geq 0$,
which are known to satisfy $\psi(\Phi(s))=s$.
Then it follows from~\cite[Thm.\ VII.4(ii)]{bertoin} that
\begin{equation}\label{eq:transform}
\e e^{-s \overrightarrow F}=\psi'(0)\frac{\Phi(s)}{s},\quad s> 0.
\end{equation}
Alternatively, $\overleftarrow F$ is the last passage time of the post-infimum process (known as $X$ conditioned to stay positive) over $I=-\underline X_\infty$.
It is well known that the post-infimum process is independent of the infimum and
by William's representation~\cite[Thm.\ VII.18]{bertoin} its last passage time over~$x$ has the law of~$\tau_x$.
Hence we can add the following identity to Proposition~\ref{prop_nojumps}:
\begin{equation}\label{eq:ack}
\overleftarrow F\eqd \hat \tau_{I},
\end{equation}
where $\hat \tau$ is a copy of $\tau$ independent of $X$. In particular, this readily implies that the transform of $\overleftarrow F$ coincides with~\eqref{eq:transform} by way of the generalized Pollaczek-Khinchine formula:
$\e e^{-s I}=\psi'(0)s/\psi(s)$.

Similarly to the classical identity, Proposition~\ref{prop_nojumps} can be reformulated for a \levy process on a finite interval $[0,T]$, see Proposition~\ref{prop_nojumps2} below. Yet another possibility is to consider a general \levy process and to condition on the event $\{X_\sigma=0\}$, assuming it has positive probability.
Corollary~\ref{cor:rw} presents this type of result for random walks. 
Note that if local extrema are not necessarily distinct then $\overleftarrow F$ and $\overleftarrow G$ must be defined in a slightly different way, see Section~\ref{sec:rw}.
\begin{prop}\label{prop_nojumps2}
Let $X$ be a \levy process without positive jumps, such that $\p(X_T> 0)>0$, and let $\sigma=\sup\{t\in[0,T]:X_t\leq 0\}$.
On the event $\{X_T> 0\}$ the random times
$\overrightarrow F, \overleftarrow F, \overrightarrow G, \overleftarrow G, N^-,N^+$ have the same distribution.
\end{prop}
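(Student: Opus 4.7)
My plan is to follow the pattern of the proof of Proposition \ref{prop_nojumps}, working conditionally on the finite-horizon event $\{X_T > 0\}$ in place of relying on the almost-sure finiteness of $\sigma$ granted by $\e X_1 > 0$. The first step is to pin down the structure of the pre-$\sigma$ path. On $\{X_T > 0\}$ we have $\sigma < T$ strictly, and since $X$ has no positive jumps, right-continuity of the paths together with strict positivity of $X$ on $(\sigma, T]$ force $X_\sigma = 0$. In particular, all six random times $\overrightarrow F, \overleftarrow F, \overrightarrow G, \overleftarrow G, N^-, N^+$ are functionals of the segment $(X_t)_{t \in [0, \sigma]}$ alone.

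Next, I would disintegrate according to $\sigma$ and the post-$\sigma$ path. Using the strong Markov property together with a Williams-type path decomposition at the last-passage time $\sigma$, the pre- and post-$\sigma$ segments are conditionally independent given $\sigma$; given $X_\sigma = 0$, the event $\{X_T > 0\}$ is the event that the post-$\sigma$ part stays strictly positive on $(0, T - \sigma]$, and so conditioning on $\sigma = s$ and on $\{X_T > 0\}$ imposes no restriction on the pre-$\sigma$ path beyond the terminal condition $X_s = 0$. Equivalently, conditional on $\sigma = s$ and $\{X_T > 0\}$, the pre-$\sigma$ segment is distributed as a \levy bridge without positive jumps from $0$ to $0$ of length $s$.

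The third step is to invoke a Sparre-Andersen-type identity for such bridges: by the exchangeability of the bridge increments, a Vervaat-style cyclic-rearrangement argument shows that $\overrightarrow F, \overleftarrow F, \overrightarrow G, \overleftarrow G, N^-, N^+$ all have the uniform distribution on $[0, s]$; in particular they share a common law. Integrating against the conditional distribution of $\sigma$ under $\p(\,\cdot \mid X_T > 0)$ then yields the proposition.

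The principal obstacle is the rigorous identification of the conditional law in Step 2 as a \levy bridge, since $\sigma$ is not a stopping time and the conditioning on $\{X_\sigma = 0\}$ is nominally a conditioning on a null event. This is where the absence of positive jumps is essential a second time: it forces $X_\sigma = 0$ deterministically on $\{X_T > 0\}$, reducing the conditioning to one on the value of $\sigma$ alone, and it guarantees that the running supremum of the bridge is attained at a unique time, so that the cyclic-rearrangement argument applies without the tie-breaking modifications alluded to in the text.
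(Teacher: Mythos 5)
Your high-level strategy---decompose at $\sigma$, identify the pre-$\sigma$ segment as a bridge from $0$ to $0$, and apply Sparre Andersen's cyclic-rearrangement theorem to conclude that all six times are uniform on $[0,\sigma]$---is a legitimate route, and it in fact targets the stronger uniform-law statement that the paper records in Section 2. Your first step is also correct and is exactly what the paper uses: on $\{X_T>0\}$ the absence of positive jumps forces $X_\sigma=0$ (the paper phrases this as $X_{\sigma-}=0$), which is what collapses the two equidistribution classes of the general result into one. The problem is that your Step 2, which you yourself label ``the principal obstacle,'' is never actually carried out, and it is the entire mathematical content of the proof. The strong Markov property does not apply at $\sigma$, which is a last-passage time rather than a stopping time; a ``Williams-type decomposition'' at $\sigma$ on a finite horizon is not an off-the-shelf fact but precisely the statement that needs proving; and identifying the conditional law of $(X_t)_{t\le\sigma}$ given $\{\sigma=s\}$ as an unconditioned bridge is a disintegration over null events that you assert rather than establish. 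Noting that $X_\sigma=0$ holds deterministically does not ``reduce the conditioning to one on the value of $\sigma$ alone'': one must still show that, given $\sigma=s$ and $\{X_T>0\}$, the increments of $X$ on $[0,s]$ are exchangeable.

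The paper closes exactly this gap by discretization. For the approximating random walk $S^{(n)}_i=X_{i/n}$ the event $\{\sigma=k,\,S_n\in B\}$ has positive probability, and conditionally on it the increments $\zeta_1,\dots,\zeta_k$ are manifestly exchangeable; this yields the time-reversal identity $\hat S\eqd S$ up to $\sigma$ and, via the discrete Sparre-Andersen theorem, the two equidistributed triples of Proposition~\ref{prop:rw}. One then passes to the \levy limit as in Section~\ref{sec:levy}, using that for non-compound-Poisson $X$ local extrema are distinct and $\int_0^\sigma\1{X_t=0}\,\D t=0$ a.s., and finally merges the triples via $X_{\sigma-}=0$. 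If you wish to keep your bridge-plus-cyclic-lemma architecture, you would still have to route it through the same discrete approximation (apply Feller's Theorem XII.8.3 to $\zeta_1,\dots,\zeta_k$ conditionally on $\{\sigma=k,\,S_n\in B\}$ and take limits), at which point the two arguments essentially coincide. A further, more concrete slip: the absence of positive jumps does \emph{not} guarantee distinct extrema. A compound Poisson process with drift---the Cram\'er--Lundberg model, the paper's central example---can attain its infimum on $[0,\sigma]$ at two distinct times with positive probability, which is precisely why the paper modifies the definitions of $\overleftarrow F$ and $\overleftarrow G$ in that case; your claim that the tie-breaking modifications can be dispensed with is therefore incorrect.
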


In general, when jumps of both signs are allowed, the above equivalence of laws does not hold.
Instead, we can partition these times into two classes of three elements in each according to their laws.
We state this general result for random walks and provide its short proof in Section~\ref{sec:rw}.
Its standard extension to \levy processes is discussed in short in Section~\ref{sec:levy},
where we also give some additional comments.

\section{Intuitive explanation and further consequences}
There is a simple explanation of the above results: the Sparre-Andersen identity  holds for the random time interval $[0,\sigma]$ (applied to $-X$), and the process seen from~$\sigma$ (backwards in time and downwards in space) has the same law as the original process up to~$\sigma$. The fundamental reason behind these observations is that the increments of the approximating random walk are exchangeable random variables conditioned on~$\{\sigma=n\}$, see Section~\ref{sec:rw} and Section~\ref{sec:levy} for details. 

By considering the approximating random walk we observe some interesting further consequences. Firstly, we notice that the above 6 random times have the same distribution conditional on $\sigma$, and so 
\begin{align}
\text{the pairs }
(\overrightarrow G,\overleftarrow G),(\overrightarrow F,\overleftarrow F),
(N^+,N^-)\text{ have the same distribution}
\end{align}
with exchangeable components.
The corresponding transform (under assumptions of Proposition~\ref{prop_nojumps}) can be obtained in a similar way as above:
\begin{equation}\e e^{-s \overrightarrow F-t \overleftarrow F}=
\e e^{-s \overrightarrow F-t\hat \tau_I}=\e e^{-s \overrightarrow F+\Phi(t)\underline X_\infty}=\psi'(0)\frac{\Phi(s)-\Phi(t)}{s-t}
\end{equation}
using~\eqref{eq:ack} and the explicit form of the Wiener-Hopf factor corresponding to the infimum, see e.g.~\cite[Thm.\ VII.4(ii)]{bertoin}. Taking $t\uparrow s$ we get $\e e^{-s\sigma}=\psi'(0)\Phi'(s)$ confirming the result of~\cite{last_passage}.

Finally, another result by Sparre Andersen~\cite{sparre_andersen_cyclical}, see also~\cite[Thm.\ XII.8.3]{feller},
states that the time of the maximum of a random walk `conditioned' to hit 0 at its terminal time, cf.\ Brownian bridge, has a uniform distribution.
This is a simple consequence of cyclical rearrangements of increments.
In our setting this result implies that our 6 random times have a uniform distribution on $[0,\sigma]$, i.e.
\begin{equation}\p(\overrightarrow F\in\D x|\sigma=t)=\frac{1}{t}\1{x\in[0,t]}\D x,\quad t>0.
\end{equation}
This result complements well-known uniform laws for L\'evy bridges~\cite{bridges,knight} stemming from the same result of Sparre Andersen, see also~\cite{alili,marchal} for an extension of the cyclical rearrangement idea.
In general, we have to assume that $X$ is a L\'evy process with distinct extrema
conditioned on $\{X_\sigma=0\}$.

\section{Random walk}\label{sec:rw}
Consider a random walk $S_i=\sum_{j=1}^i\zeta_j$ for $i=0,\ldots,n$, where $\zeta_1,\ldots, \zeta_n$ be iid random variables.
Let us condition this random walk on the (positive probability) event $\{S_n\in B\}$ for some Borel set~$B$; later we will take $B=\R$ and $B=[0,\infty)$.
Let
\[\sigma=\max\{i\leq n:S_i\leq 0\}\]
be the last time of $S_i$ in the non-positive half line.
Let
\begin{align*}
&\underline S=\min\{S_i:i\leq \sigma\}, &\overline S=\max\{S_i:i\leq \sigma\}
\end{align*}
and define the following 8 quantities:
\begin{align*}&N^-=\sum_{i=1}^\sigma\1{S_i\leq 0}, &N^+=\sum_{i=1}^\sigma\1{S_i\geq 0},\\
&\tilde N^-=\sum_{i=0}^{\sigma-1}\1{S_i\leq S_\sigma}, &\tilde N^+=\sum_{i=0}^{\sigma-1}\1{S_i\geq S_\sigma},\\
&\overrightarrow F=\max\{i\leq \sigma:S_i=\underline S\}, &\overrightarrow G=\max\{i\leq \sigma:S_i=\overline S\},\\
&\overleftarrow F=\sigma-\min\{i\leq \sigma:S_i=\underline S\}, &\overleftarrow G=\sigma-\min\{i\leq \sigma:S_i=\overline S\},
\end{align*}
see Fidure~\ref{fig_rw}.
\begin{figure}[h]
  \centering
  \includegraphics{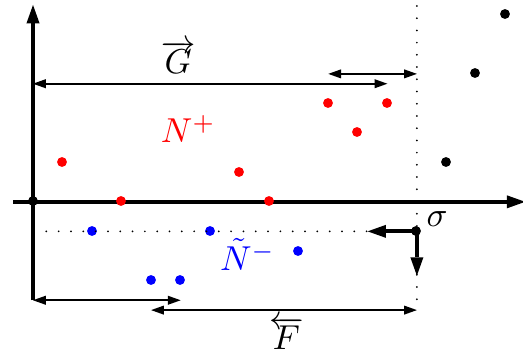}
  \caption{A realization of a random walk $S$ and the corresponding random times.}
\label{fig_rw}
\end{figure}
Moreover, we define a process $\hat S_i, i=0,\ldots,\sigma$ by
\[\hat S_i=S_\sigma-S_{\sigma-i},\]
which is just $-S$ time reversed at~$\sigma$.
\begin{prop}\label{prop:rw}
For a random walk $S_i,i=0,\ldots,n$ conditioned on $\{S_n\in B\}$ it holds that
\begin{itemize}
\item $\hat S$ has the law of $S$ considered up to~$\sigma$,
\item $N^-,\tilde N^+,\overrightarrow F,\overleftarrow G$ have the same distribution,
\item $N^+,\tilde N^-,\overleftarrow F,\overrightarrow G$ have the same distribution.
\end{itemize}
\end{prop}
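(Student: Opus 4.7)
I will first establish the central exchangeability fact underlying the whole proposition: conditional on $\{\sigma=m,S_n\in B\}$, the first $m$ increments $(\zeta_1,\ldots,\zeta_m)$ are exchangeable. Writing $T_m=\zeta_1+\cdots+\zeta_m$, the conditioning event factors as $\{T_m\leq 0\}\cap\bigcap_{j=1}^{n-m}\{T_m+\zeta_{m+1}+\cdots+\zeta_{m+j}>0\}\cap\{T_m+\zeta_{m+1}+\cdots+\zeta_n\in B\}$, so its indicator depends on $(\zeta_1,\ldots,\zeta_m)$ only through the symmetric functional $T_m$, and the iid law of the $\zeta_i$ then yields exchangeability by the usual symmetry argument. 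Every claim in the proposition will be reduced to a statement under this fixed-$m$ conditional law and then integrated over $m$; the degenerate case $\sigma=0$, on which all eight quantities vanish, is trivial.

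For the first bullet, a one-line computation gives $\hat S_i-\hat S_{i-1}=\zeta_{\sigma-i+1}$, so $\hat S$ has the reversed increments of $S$ on $[0,\sigma]$, and exchangeability immediately produces the equality of laws. This identity already delivers four of the required distributional equalities: combining $\min\hat S=S_\sigma-\overline S$ and $\max\hat S=S_\sigma-\underline S$ with the definitions, one reads off $\overrightarrow F(\hat S)=\overleftarrow G(S)$, $\overleftarrow F(\hat S)=\overrightarrow G(S)$, $N^-(\hat S)=\tilde N^+(S)$ and $N^+(\hat S)=\tilde N^-(S)$, which pass to $\overrightarrow F\eqd\overleftarrow G$, $\overleftarrow F\eqd\overrightarrow G$, $N^-\eqd\tilde N^+$, and $N^+\eqd\tilde N^-$.

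To close the remaining gap between the $N^{\pm}$-type counts and the extremal times $\overrightarrow F,\overrightarrow G$, I would invoke the Sparre-Andersen identity itself on the walk $(S_0,\ldots,S_\sigma)$, whose increments are exchangeable under the fixed-$m$ conditional law. Feller's cycle-lemma proof (\cite[Thm.\ XII.8.2]{feller}) uses only exchangeability, not independence, and in the version that pairs the \emph{last} index of the maximum with the count of \emph{weakly} positive partial sums gives $\overrightarrow G\eqd N^+$; the same identity applied to $-S$ yields $\overrightarrow F\eqd N^-$. Chaining with the time-reversal equalities above closes the two quadruples.

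The step requiring most care is precisely the strict-versus-weak and first-versus-last bookkeeping in Sparre-Andersen: matching the \emph{last} time of the maximum with the \emph{weak}-inequality count is what places $\overrightarrow F$ and $\overleftarrow G$ together with $N^-$ rather than with $\tilde N^-$; the opposite convention would cross-swap the two quadruples and falsify the statement. A small toy example with atomic increments pins down the correct pairing, and the cycle-lemma argument transfers without modification from the iid to the exchangeable setting.
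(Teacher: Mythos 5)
Your proof is correct and follows essentially the same route as the paper: exchangeability of the increments under the conditioning, time reversal at $\sigma$ to obtain $N^-\eqd\tilde N^+$, $N^+\eqd\tilde N^-$, $\overrightarrow F\eqd\overleftarrow G$, $\overleftarrow F\eqd\overrightarrow G$, and the Sparre-Andersen identity (last index of the maximum paired with the weak-inequality count) to close each quadruple. The only cosmetic difference is that you invoke Sparre-Andersen under the conditioning $\{\sigma=m,\,S_n\in B\}$ for both links, whereas the paper gets $N^-\eqd\overrightarrow F$ already from exchangeability given $\{S_n\in B\}$ alone, using that $\sigma$ may be replaced by $n$ in those two definitions, and reserves the fixed-$\sigma$ conditioning for $N^+\eqd\overrightarrow G$.
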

\begin{proof}
For fixed $k=0,\ldots, n$ consider an event $\{\sigma=k\}=\{S_k\leq 0,S_i>0\text{ for all }k<i\leq n\}$ (assuming it has a positive probability).
Note that on the event $\{\sigma=k,S_n\in B\}$ the sequences $\zeta_1,\ldots, \zeta_k$ and $\zeta_k,\ldots, \zeta_1$ have the same law, and so
$S_i,i=0,\ldots,k$ and $\hat S_i,i=0,\ldots,k$ have the same laws.
Now the first statement follows by conditioning on~$\sigma$.

From the law equivalence of $S$ and $\hat S$ we get that
\[N^-\eqd \tilde N^+,\quad N^+\eqd \tilde N^-,\quad \overrightarrow F\eqd \overleftarrow G,\quad \overleftarrow F\eqd\overrightarrow G,\]
which is easily understood by drawing a picture. Otherwise, observe that $S_i\geq S_\sigma$ is the same as $0\geq S_\sigma-S_i=\hat S_{\sigma-i}$
and so
\begin{equation}\label{eq:N}
\tilde N^+=\sum_{i=0}^{\sigma-1} \1{\hat S_{\sigma-i}\leq 0}=\sum_{i=1}^{\sigma} \1{\hat S_i\leq 0}.
\end{equation}
This proves the first equality,
and the second follows similarly. Also
\begin{align}\label{eq:G}
\overleftarrow G&=\max\{\sigma-i\in[0,\sigma]:S_i=\overline S\}=\max\{j\in[0,\sigma]:S_\sigma-S_{\sigma-j}=S_{\sigma}-\overline S\}\nonumber\\
&=\max\{j\in[0,\sigma]:\hat S_j=\min\{\hat S_i:i\leq \sigma\}\}.
\end{align}
This proves the third statement and the fourth follows similarly.

Next, note that $\zeta_1,\ldots,\zeta_n$ conditioned on the event $\{S_n\in B\}$ are exchangeable random variables.
Thus it follows from Sparre-Andresen identity, see~\cite[Thm.\ XII.8.2]{feller}, that
$N^-$ and $\overrightarrow F$ have the same distribution (note that in their definitions $\sigma$ can be replaced by $n$).

Recall that $\zeta_1,\ldots,\zeta_k$ conditioned on the event $\{\sigma=k,S_n\in B\}$ are exchangeable random variables.
Thus on this event $N^+$ and $\overrightarrow G$ have the same distribution, which by conditioning is also true on the event $\{S_n\in B\}$.
\end{proof}
\begin{cor}\label{cor:rw}Assume that $\p(S_\sigma=0)>0$, then on the event $\{S_\sigma=0\}$ it holds that $N^-,N^+,\overrightarrow F,\overleftarrow F,\overrightarrow G,\overleftarrow G$
have the same distribution.
\end{cor}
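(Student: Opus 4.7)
My plan is to reduce the corollary to Proposition~\ref{prop:rw} via two ingredients: a pointwise coincidence between the tilded and untilded counters on the event $\{S_\sigma=0\}$, and the fact that the two law-equivalence classes produced by Proposition~\ref{prop:rw} persist when we condition additionally on this event.

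For the pointwise step, note that on $\{S_\sigma=0\}$ one has $\1{S_i\leq 0}=\1{S_i\leq S_\sigma}$ for every $i$, together with $S_0=S_\sigma=0$. Splitting off the endpoints of the sums yields
\[N^-=\sum_{i=1}^{\sigma-1}\1{S_i\leq 0}+1=\tilde N^-,\qquad N^+=\sum_{i=1}^{\sigma-1}\1{S_i\geq 0}+1=\tilde N^+\]
on this event. Thus if the two classes $\{N^-,\tilde N^+,\overrightarrow F,\overleftarrow G\}$ and $\{N^+,\tilde N^-,\overrightarrow G,\overleftarrow F\}$ from Proposition~\ref{prop:rw} survive the extra conditioning, the identities $N^-=\tilde N^-$ and $N^+=\tilde N^+$ link them and fuse them into a single class containing all six times.

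To see that the classes do survive, I take $B=\R$ in Proposition~\ref{prop:rw} and inspect its proof. Its engine is the exchangeability of $\zeta_1,\dots,\zeta_k$ under $\p(\cdot\mid \sigma=k)$, which powers both the time-reversal identity and the two Sparre-Andersen identities $N^-\eqd\overrightarrow F$ and $N^+\eqd\overrightarrow G$. Intersecting the conditioning event with $\{S_\sigma=0\}=\{S_k=0\}$ imposes a constraint on the symmetric sum $\zeta_1+\dots+\zeta_k$ and leaves $\zeta_{k+1},\dots,\zeta_n$ untouched, so exchangeability is preserved. Hence the entire proof of Proposition~\ref{prop:rw} goes through verbatim on $\{\sigma=k,S_k=0\}$, and, by conditioning on $\sigma$, on $\{S_\sigma=0\}$ as well. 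The only genuinely new point, and what I view as the main check, is precisely this persistence of exchangeability under the added condition; it is immediate because the event depends on $\zeta_1,\dots,\zeta_k$ only through their symmetric sum.
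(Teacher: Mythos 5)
Your proposal is correct and follows essentially the same route as the paper: you rerun the proof of Proposition~\ref{prop:rw} under the extra conditioning by checking that exchangeability of $\zeta_1,\dots,\zeta_k$ persists on $\{\sigma=k,\,S_k=0\}$ (the paper's ``small modification'' of conditioning on $\{\sigma=k\}$ for the $N^-\eqd\overrightarrow F$ step), and then fuse the two distributional classes via the pointwise identities $N^-=\tilde N^-$ and $N^+=\tilde N^+$ on $\{S_\sigma=0\}$. Your endpoint bookkeeping for these identities is a correct, more explicit version of what the paper leaves implicit.
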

\begin{proof}
The above proof requires only a small modification: we need to condition on $\{\sigma=k\}$ in the proof of $N^-\eqd\overrightarrow F$.
Finally, it follows that $N^-=\tilde N^-$ and $N^+=\tilde N^+$, showing that there is a single distribution.
\end{proof}

\section{\levy process}\label{sec:levy}
Extension of Proposition~\ref{prop:rw} to the case of a \levy process is standard, and hence only a sketch of it is presented in this note.
Consider a general \levy process~$X$, and without loss of generality assume that~$T=1$. 
Define $\sigma=\sup\{t\in[0,T]:X_t\leq 0\}$ and the corresponding time reversed process by
\[\hat X_t=X_{\sigma-}-X_{(\sigma-t)-},\quad t\in[0,\sigma),\]
where $X_{t-}$ denotes the left limit of $X$ at~$t$.
Define the random times as in Section~\ref{sec:intro} and in addition put
\begin{align*}
&\tilde N^-=\int_0^{\sigma}\1{X_t\leq X_{\sigma-}}\D t, &\tilde N^+=\int_0^{\sigma}\1{X_t\geq X_{\sigma-}}\D t.
\end{align*}

Consider a sequence of random walks $S^{(n)}$, defined by $S^{(n)}_i=X_{i/n},i=0,\ldots,n$, and the corresponding sequence of continuous approximations $X^{(n)}$ of $X$,
where points $(i/n,X_{i/n}), i=0,\ldots,n$ are connected by line segments (the appropriate topology is $M_1$, see~\cite[Ch.\ 3.3]{whitt}).
This setup and law equivalence of $\hat S$ and $S$ readily show that $\hat X$ has the same law as $X_t,t\in[0,\sigma).$  

Finally, we need to show that $n^{-1}{N^{(n)}}^-$ (corresponding to $S^{(n)}$) converges to $N^-$ (corresponding to $X$) a.s., and the same for the other quantities.
The case of a compound Poisson process is rather obvious, but requires to use another definition of $\overleftarrow F$ and $\overleftarrow G$:
\begin{align*}
&\overleftarrow F=\sigma-\min\{t\in[0,\sigma):X_t=\underline X_T\}, &\overleftarrow G=\sigma-\min\{t\in[0,\sigma):X_t=\overline X_{\sigma-}\}.
\end{align*}
Now suppose that $X$ is not a compound Poisson process. 
Then $\int_0^\sigma\1{X_t=0}\D t=0$ a.s, see~\cite[Prop.\ I.15]{bertoin}, and then also
 $\int_0^\sigma\1{X_t=X_{\sigma-}}\D t=0$ a.s., because of the law equivalence of $X$ and $\hat X$. 
 In addition, local extrema of $X$ are all distinct, see~\cite[Prop.\ VI.4]{bertoin}.
 Now the convergence of the scaled times for random walks to their \levy counterparts is clear,
see also the proof of~\cite[Lem.\ VI.15]{bertoin} presenting extension of the Sparre-Andersen identity to the \levy process case.

In conclusion, $N^-,\tilde N^+,\overrightarrow F,\overleftarrow G$ have the same distribution, and the same is true for $N^+,\tilde N^-,\overleftarrow F,\overrightarrow G$.
Moreover, Proposition~\ref{prop_nojumps} follows from Proposition~\ref{prop_nojumps2}, and the latter follows immediately from the general result,
by noticing that $X_{\sigma-}=0$ and thus $N^-=\tilde N^-,N^+=\tilde N^+$.
Finally, under conditions of Proposition~\ref{prop_nojumps} the time-reversed process $\hat X_t=-X_{(\sigma-t)-},t\in[0,\sigma)$
has the law of $X$ considered up to~$\sigma$.

\section*{Acknowledgment}
This note started from an observation that $N^-\eqd\tau_{\hat I}$ for a spectrally-negative \levy process, which together with~\eqref{eq:ack} asked for further investigations.
I would like to thank Hansjörg Albrecher for discussions which led to this observation, and Victor Rivero for drawing my attention to~\cite{alili,marchal}.
The support by Swiss National Science Foundation Project 200020\_143889 is gratefully acknowledged.
%\bibliographystyle{plain}
%\bibliography{saBIB}

\end{document}